\theoremstyle{plain}
\newtheorem{theorem}{Theorem}[section]
\newtheorem{proposition}[theorem]{Proposition}
\newtheorem{lemma}[theorem]{Lemma}
\theoremstyle{definition}
\theoremstyle{remark}
\newtheorem{remark}[theorem]{Remark}
\numberwithin{equation}{section}
\newcommand{\Ric}{\mathrm{Ric}}
\begin{document}

\title{Conformally K\"ahler Geometry and Quasi-Einstein metrics}
\begin{abstract}
We prove that the quasi-Einstein metrics found by L\"u, Page and Pope on $\mathbb{C}P^{1}$-bundles over Fano K\"ahler-Einstein bases are conformally K\"ahler and that the K\"ahler class of the conformal metric is a multiple of the first Chern class. A detailed study of the lowest-dimensional example of such metrics on $\mathbb{C}P^{2}\sharp \overline{\mathbb{C}P}^{2}$ using the methods developed by Abreu and Guillemin  for studying toric K\"ahler metrics is given. Our  methods  yield, in  a unified framework, proofs of the existence of the Page, Koiso-Cao and L\"u-Page-Pope  metrics on $\mathbb{C}P^{2}\sharp \overline{\mathbb{C}P}^{2}$.   Finally, we investigate the properties that similar quasi-Einstein metrics would have if they also exist on the toric surface $\mathbb{C}P^{2}\sharp 2 \overline{\mathbb{C}P}^{2}$. 
\end{abstract}
\author{Wafaa Batat}
\address{Ecole Nationale Polytechnique d'Oran, B.P 1523 El M'naouar, 31000 Oran, Algeria}
\email{batatwafa@yahoo.fr}
\author{Stuart J. Hall}
\address{Department of Applied Computing, University of Buckingham, Hunter St., Buckingham, MK18 1G, U.K.} 
\email{stuart.hall@buckingham.ac.uk}
\author{Ali Jizany}
\address{Department of Applied Computing, University of Buckingham, Hunter St., Buckingham, MK18 1G, U.K.} 
\email{ali.jizany@buckingham.ac.uk}
\author{Thomas Murphy}
\address{Department of Mathematics, California State University Fullerton, 800 N. State College Blvd., Fullerton, CA 92831, USA.}
\email{tmurphy@fullerton.edu}
\maketitle
\section{Introduction}
A quasi-Einstein metric is a complete Riemannian manifold $(M,g)$, satisfying 
\begin{equation}\label{QEMeq}
Ric(g)+\nabla^{2}\phi-\frac{1}{m}d\phi\otimes d\phi=\lambda g,
\end{equation}
for some function $\phi \in C^{\infty}(M)$ and constants $\lambda$, $m$ with $m>0$.  Setting $\phi$ to be constant yields an Einstein metric,  so solutions to Equation (\ref{QEMeq}) with  nonconstant $\phi$  are referred to as \textit{non-trivial} quasi-Einstein metrics. When $m$ is a positive integer such metrics are important as the base manifolds for warped product constructions of Einstein metrics.  As well as generalising the Einstein condition, quasi-Einstein metrics can be thought of as deformations of gradient Ricci solitons, which are of central importance in the theory of  Ricci flow. By formally taking $m\rightarrow \infty$ in Equation (\ref{QEMeq})  one recovers the equation defining a gradient Ricci soliton. Given their relationship with both types of canonical metric, a fundamental question is: in what way are quasi-Einstein metrics like Ricci solitions, and in what way are they like Einstein metrics?\\
\\
The only known examples of compact quasi-Einstein metrics where $m$ varies continuously are essentially due to a construction of L\"u, Page and Pope \cite{LPP} on $\mathbb{C}P^1$-bundles over Fano K\"ahler-Einstein manifolds,  where the total space is denoted  $W_q$. This construction was generalised by the second author in \cite{Hallqem}. On such spaces, non-K\"ahler Einstein metrics were known to exist due to a similar construction of B\'erard-Bergery \cite{BB}  (later generalised by Wang and Wang \cite{WW}).  These spaces also admit shrinking gradient Ricci solitons due to Koiso \cite{Koi}, Cao \cite{Cao} and Chave and Valent \cite{CV}. A more general construction of such solitons was later given by  Dancer and Wang \cite{DW}.  All the examples of Ricci solitons on these manifolds are K\"ahler. The B\'erard-Bergery Einstein metrics turn out to be conformally K\"ahler. However quasi-Einstein metrics are \emph{never} K\"ahler, due to a foundational result of Case, Shu and Wei \cite{CSW}. Nevertheless, we will show K\"ahler geometry plays a role in the theory of quasi-Einstein metrics on these spaces. \\
\\
In Section \ref{Sect2} we show that the L\"u-Page-Pope metrics are conformally K\"ahler and so are similar to the B\'erard-Bergery Einstein metrics on these spaces.  Maschler, in \cite{Mas}, suggested that this was likely to be the case, and it is probably known to experts. What is more surprising is that we are able to show that the K\"ahler metrics always lie in a multiple of the first Chern class. In this way the L\"u-Page-Pope metrics  are similar to the Dancer-Wang Ricci solitons, since  any K\"ahler-Ricci soliton must lie in the first Chern class.\\
\\
The next, and most significant, part of the article makes the link with K\"ahler geometry even more explicit for the  lowest dimensional case of the L\"u-Page-Pope construction. Here the underlying manifold in this case is the non-trivial $\mathbb{C}P^1$-bundle over $\mathbb{C}P^1$, which can also be described as the one-point blow up of the complex projective plane, $\mathbb{C}P^2\sharp \overline{\mathbb{C}P}^{2}$. The Einstein metric given by this construction was originally discovered by Page \cite{Page}, and the associated conformally K\"ahler metric is due to Calabi \cite{Cal}.  The K\"ahler-Ricci soliton on this manifold was originally discovered independently by Koiso \cite{Koi} and Cao \cite{Cao}. All of these K\"ahler metrics are toric, and therefore have a beautiful description due to Abreu \cite{Abr1}, \cite{Abr2} and Guillemin \cite{Gui}. In section \ref{Sect3}, we show that the L\"u-Page-Pope metrics can also be explicitly described in this framework. This has a number of consequences; it leads to a greatly simplified proof of the existence of the quasi-Einstein metrics, and also gives a straightforward proof the results of section \ref{Sect2} in this special case. Moreover, our construction provides a unified framework for constructing the Page, Koiso-Cao, and L\"u-Page-Pope metrics in one fell swoop.\\
\\
In section \ref{Sect4}, the  related toric surface $\mathbb{C}P^2\sharp2 \overline{\mathbb{C}P}^{2}$ is studied. This manifold is known to admit a conformally K\"ahler Einstein metric, analogous to the Page metric, due to Chen-LeBrun-Weber \cite{CLW}. It also admits a K\"ahler-Ricci soliton, analogous to the Koiso-Cao metric, due to Wang and Zhu \cite{WZ} (Donaldson gives an alternative proof of the existence theorem using the Abreu-Guillemin framework in \cite{Don}). The problem of constructing quasi-Einstein metrics analogous to the L\"u-Page-Pope metrics on $\mathbb{C}P^2\sharp2 \overline{\mathbb{C}P}^{2}$ is a natural open problem. If a family of quasi-Einstein metric with the same properties as the L\"u-Page-Pope were to exist on $\mathbb{C}P^2\sharp \overline{\mathbb{C}P^2}$, then our methods allow us to determine the explicit form of the potential function $\phi$ for metrics in a given cohomology class.\\
\\
Finally in section \ref{Sect5} we discuss some open problems and areas for future research.\\
\\
\textit{Acknowledgements}:
The majority of this work was undertaken whilst WB and TM  paid research visits to SH in December 2014 and January 2015.  The visit of WB was supported by a Scheme 5 grant from the London Mathematical Society. The visit of TM was funded by a Dennison research grant from the University of Buckingham and a grant from California State University Fullerton. The authors wish to warmly thank Gideon Maschler for useful comments on an early draft of this article.

\section{The geometry of L\"u-Page-Pope metrics}\label{Sect2}
In this section we make precise the relationship between the L\"u-Page-Pope quasi-Einstein metrics on certain $\mathbb{C}P^{1}$-bundles and the K\"ahler geometry of such manifolds.
\subsection{Construction of metrics on the manifolds $W_{q}$}
We begin by describing the construction of the manifolds $W_{q}$.  Let $(M,h,J)$ be a Fano K\"ahler-Einstein manifold of complex dimension $n$. Write the first Chern class of $M$ as $c_{1}(M) = pa$, where $p\in \mathbb{N}$ and $a\in H^{2}(M,\mathbb{Z})$ is an indivisible class. For example, in the case $M=\mathbb{C}P^{n}$, one has  $p=n+1$ and $a = c_{1}(\mathcal{O}(1)$). The metric $h$ is normalised so that $$Ric(h) = ph.$$ In other words, if $\eta(\cdot,\cdot)=h(J\cdot,\cdot)$ is the K\"ahler form of $h$, then $[\eta]=a$.\\
\\
Denote by $P_{q}$ the principal $U(1)$-bundle over $M$ with Euler class $e = qa$ where $q\in \mathbb{Z}$. Let $\theta$ be the connection with curvature ${\Omega = q\eta}$. Finally, denote by $W_{q}$ the projectivization $\mathbb{P}(L_{q}\oplus \mathcal{O})$ where $L_{q}$ is the associated holomorphic line-bundle of $P_{q}$. It is useful to view the manifolds $W_{q}$ as the compactification of ${P_{q}\times (0,4)}$ obtained by collapsing a $U(1)$-fiber at $0$ and $4$.  This gives rise to Riemannian metrics on $W_{q}$ of the form
\begin{equation}\label{metrics}
g = \alpha(s)^{-1}ds^{2}+\alpha(s)\theta \otimes\theta + \beta(s)\pi^{\ast}h,
\end{equation}
where $s$ is the coordinate on $(0,4)$, $\pi: {W_{q}\rightarrow M}$ is the projection and ${\alpha, \beta \in C^{\infty}((0,4))}$.  In order for the metrics of the form (\ref{metrics}) to extend smoothly to the compactification $W_{q}$, the functions $\alpha$ and $\beta$ satisfy $$\alpha(0)=\alpha(4)=0 \textrm{ and } \alpha'(0)=-\alpha'(0)=2.$$

The precise theorem that guarantees existence of non-trivial quasi-Einstein metrics on the manifolds $W_{q}$ is Theorem 3 in \cite{Hallqem} (the case when $m$ is integral  was first proved in \cite{LPP}). In the case where the base manifold is a single factor this can be restated as:
\begin{theorem}
For $0<|q|<p$, let $W_{q}$ be as described above. Then, for all $m>1$, there exists a non-trivial quasi-Einstein metric of the form (\ref{metrics}) on $W_{q}$. Furthermore the function $\beta$ is given by
$$\beta(s) = A(s+s_{0})^{2}-\frac{q^{2}}{4A},$$
where $s_{0}$ and $A$ are constants satisfying
\begin{equation}\label{Aeq}
s_{0}(s_{0}+4) = \frac{8Ap+q^{2}}{4A^{2}}.
\end{equation}
\end{theorem}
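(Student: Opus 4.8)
The plan is to use the cohomogeneity-one symmetry of (\ref{metrics}) to turn the quasi-Einstein equation (\ref{QEMeq}) into a system of ordinary differential equations in $s$, and then to exhibit an explicit solution. First I would choose a local $h$-orthonormal coframe on $M$ together with the fibre form $\theta$ and the radial form $\alpha^{-1/2}\,ds$, and compute $\Ric(g)$ from the O'Neill formulas for the Riemannian submersion $W_q\to M$. Since $(M,h)$ is K\"ahler-Einstein with $\Ric(h)=ph$ and $\theta$ has curvature $\Omega=q\eta$, the tensor $\Ric(g)$ is diagonal in this frame with three distinct blocks---radial, fibre and base---each a rational expression in $\alpha,\beta$ and their first two derivatives; the terms proportional to $q^{2}$ come from the O'Neill integrability tensor and the term proportional to $p$ from the base. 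Seeking an invariant potential $\phi=\phi(s)$, both $\nabla^{2}\phi$ and $d\phi\otimes d\phi$ are diagonal in the same frame, so (\ref{QEMeq}) decouples into three scalar ODEs relating $\alpha,\beta,\phi$ and the constants $\lambda,m,p,q$.

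The key step is to solve for $\beta$. Taking the combination of the base and fibre blocks that cancels the second derivatives of $\alpha$ and the $\phi$-contributions---essentially the component of the contracted Bianchi identity transverse to the orbits---I expect to obtain a closed equation forcing $\beta''$ to be the constant $2A$, where $A$ is fixed by $\lambda$ and $m$. Since $\beta''=2A$ gives $\tfrac{d}{ds}\big((\beta')^{2}-4A\beta\big)=0$, the quantity $(\beta')^{2}-4A\beta$ is a first integral, and the fibre block identifies its value with the curvature scale $q^{2}$; integrating $(\beta')^{2}-4A\beta=q^{2}$ yields precisely $\beta(s)=A(s+s_{0})^{2}-\tfrac{q^{2}}{4A}$, as one checks from $(\beta')^{2}-4A\beta=4A^{2}(s+s_{0})^{2}-4A^{2}(s+s_{0})^{2}+q^{2}=q^{2}$. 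With $\beta$ known, the radial and fibre equations combine into a single first-order linear ODE for $\alpha$ with integrating factor $\mu(s)=\beta(s)^{n}e^{-\phi(s)}$, solvable by quadrature once $\phi$ is recovered from the radial equation. The relation (\ref{Aeq}) then emerges as a compatibility condition: imposing the smoothness data $\alpha(0)=\alpha(4)=0$ and $\alpha'(0)=-\alpha'(4)=2$ and evaluating the first integral of the $\alpha$-equation at the two collapsing fibres $s=0,4$, the difference of the two boundary identities reduces to $s_{0}(s_{0}+4)=(8Ap+q^{2})/(4A^{2})$.

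The main obstacle is the existence claim: for every $m>1$ and every $q$ with $0<\abs{q}<p$ one must choose $A,s_{0},\lambda$ so that the resulting $\alpha$ vanishes at both endpoints with the forced slopes, stays positive on $(0,4)$, and is accompanied by a $\beta$ that is positive on $[0,4]$. I would treat $\alpha(0)=0,\ \alpha'(0)=2$ as initial data, integrate the linear equation explicitly against $\mu$, and view $\alpha(4)$ and $\alpha'(4)+2$ as functions of the free parameters; controlling their signs as $A\to 0^{+}$ and $A\to\infty$, and using monotonicity in the remaining parameter, an intermediate-value argument then produces a solution, equivalent to (\ref{Aeq}) together with the determination of $\lambda$. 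Positivity of $\beta$ on $[0,4]$ requires that the interval avoid the open arc between the two roots $-s_{0}\pm\abs{q}/(2A)$ of the upward parabola $\beta$, and checking that this can be arranged compatibly with the smooth collapse of the fibre is exactly where the hypothesis $0<\abs{q}<p$ enters.
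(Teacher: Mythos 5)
First, a point of comparison: the paper does not actually prove this theorem. It is quoted (in the single-factor case) from Theorem 3 of \cite{Hallqem}, with the integral-$m$ case attributed to \cite{LPP}; see also the Remark immediately following the statement. Measured against the construction in those references, your outline has the right overall shape --- reduce (\ref{QEMeq}) to a system of scalar ODEs in $s$ using the cohomogeneity-one structure, take $\beta$ quadratic, solve a first-order linear equation for $\alpha$ by quadrature against an integrating factor of the form $\beta^{n}e^{-\phi}$, and close up the two ends --- but two of your steps do not survive scrutiny.

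The first is your ``key step''. Writing out the radial, fibre and base blocks, the radial and fibre blocks each contain $\alpha''$, so the only combination of blocks cancelling $\alpha''$ is (radial)$-$(fibre), and that combination retains $\nabla^{2}\phi$ and $d\phi\otimes d\phi$ in the radial direction as well as first-order couplings to $\alpha$ and $\phi$; the base block alone is free of $\alpha''$ but still involves $\alpha$, $\alpha'$ and $\phi'$. No combination produces a closed equation forcing $\beta''=2A$. In \cite{LPP} and \cite{Hallqem} the quadratic form $\beta(s)=A(s+s_{0})^{2}-q^{2}/(4A)$ is an \emph{ansatz} --- it is precisely what makes the metric conformally K\"ahler, cf.\ Lemma \ref{confKlem} --- and since the theorem only asserts existence of \emph{a} metric with this $\beta$, you should impose it and check consistency rather than claim to derive it. The second, more serious, gap is the existence claim itself. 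The theorem promises a \emph{non-trivial} quasi-Einstein metric for every $m>1$ and every $0<\abs{q}<p$; your intermediate-value sketch neither identifies the quantity whose sign change produces the solution nor explains why the resulting $\phi$ is nonconstant. In \cite{Hallqem} this is exactly where the non-vanishing of a Futaki-type integral enters (the paper's own Remark after the theorem, and the explicit $I(b)$ computation in Section \ref{Sect3} in the four-dimensional case, show what this looks like), and it is the hypothesis $0<\abs{q}<p$ --- not merely positivity of $\beta$ on $[0,4]$ --- that guarantees the integral is nonzero. Your account of where (\ref{Aeq}) comes from (the difference of the endpoint identities for the first integral of the $\alpha$-equation) is plausible and consistent with how (\ref{Aeq}) is used in the proof of Theorem \ref{mainT}, but as written it is an expectation, not a verification.
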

\begin{remark}
Theorem 3 in \cite{Hallqem} involves a constraint, the non-vanishing of a certain integral, which the L\"u-Page-Pope examples automatically satisfy.  The constraint is suggestive of a link between the quasi-Einstein metrics on $W_{q}$ and K\"ahler geometry as the integral is essentially the Futaki invariant.   A nontrivial K\"ahler-Ricci soliton must have non-vanishing Futaki invariant.
\end{remark} 
\subsection{The complex geometry of $W_{q}$}
The complex structure on $W_{q}$ can be described in the $(s, \theta)$-coordinates. One can lift the complex structure $J$ on the base component and then define $J(\partial_{s}) = -(1/\sqrt{\alpha})\partial_{\theta}$. Hence the K\"ahler form of Equation (\ref{metrics}) is given by
\begin{equation}
\omega = \theta \wedge ds +\beta\pi^{\ast}\eta.
\end{equation}
\begin{lemma}[cf. Corollary 7.3 in \cite{WW}]\label{confKlem}
Let $g$ be a L\"u-Page-Pope metric. If
$$\sigma(s) = -\log\left(|2A(s+s_{0})-q|\right),$$
then the  conformally related metric $g_{K}=e^{2\sigma}g$ is K\"ahler.
\end{lemma}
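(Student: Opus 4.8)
The plan is to work entirely at the level of the fundamental $2$-form. A conformal change of a Hermitian metric leaves the complex structure $J$ untouched, so $g_{K}=e^{2\sigma}g$ is again Hermitian with respect to the integrable $J$ described above, and its fundamental $2$-form is simply $\omega_{K}=e^{2\sigma}\omega$, where $\omega=\theta\wedge ds+\beta\,\pi^{\ast}\eta$. Since $J$ is integrable, $g_{K}$ is K\"ahler if and only if $d\omega_{K}=0$, so the entire lemma reduces to showing that the stated $\sigma$ forces $d\bigl(e^{2\sigma}\omega\bigr)=0$. This is the only identity I would need to establish.

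First I would compute $d\omega$. Using that $\theta$ is the connection form with curvature $d\theta=\pi^{\ast}\Omega=q\,\pi^{\ast}\eta$ and that $\pi^{\ast}\eta$ is closed (being pulled back from the K\"ahler base), a direct calculation gives
\begin{equation*}
d\omega=d\theta\wedge ds+\beta'\,ds\wedge\pi^{\ast}\eta=(q+\beta')\,\pi^{\ast}\eta\wedge ds,
\end{equation*}
which incidentally confirms that $g$ itself is not K\"ahler. Because $\sigma$ depends only on $s$, I would then expand
\begin{equation*}
d\bigl(e^{2\sigma}\omega\bigr)=e^{2\sigma}\bigl(2\sigma'\,ds\wedge\omega+d\omega\bigr)=e^{2\sigma}\bigl(2\sigma'\beta+q+\beta'\bigr)\,\pi^{\ast}\eta\wedge ds,
\end{equation*}
using $ds\wedge\omega=\beta\,\pi^{\ast}\eta\wedge ds$. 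Hence closedness is equivalent to the single first-order ODE $\sigma'=-(q+\beta')/(2\beta)$.

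The final step is to verify that the stated $\sigma$ solves this ODE, and here the key algebraic observation is that $\beta$ is a difference of two squares: from $\beta=A(s+s_{0})^{2}-\tfrac{q^{2}}{4A}$ one has
\begin{equation*}
\beta=\frac{1}{4A}\bigl(2A(s+s_{0})-q\bigr)\bigl(2A(s+s_{0})+q\bigr),\qquad \beta'=2A(s+s_{0}).
\end{equation*}
Substituting, the factor $2A(s+s_{0})+q=q+\beta'$ in the numerator cancels against the matching factor of $\beta$, leaving $\sigma'=-2A/\bigl(2A(s+s_{0})-q\bigr)$, which integrates precisely to $\sigma=-\log\abs{2A(s+s_{0})-q}$. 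I expect the main points requiring care to be bookkeeping rather than conceptual: fixing the orientation and curvature sign conventions so that the signs in $d\omega$ come out correctly, and checking that $\sigma$ is genuinely smooth and $e^{2\sigma}$ positive on the relevant range. The latter follows because $\beta>0$ forces the two linear factors of $\beta$ to share the same sign, so $2A(s+s_{0})-q$ never vanishes where $g$ is defined; this is what guarantees that $g_{K}$ extends to a bona fide K\"ahler metric rather than merely a conformal rescaling on the interior. Note also that only the functional form of $\beta$ is used, not the constraint \eqref{Aeq}.
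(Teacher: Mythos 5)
Your argument is correct and is essentially the paper's own proof: both reduce the K\"ahler condition to closedness of $\omega_{K}=e^{2\sigma}\omega$, arrive at the same first-order ODE $\beta'+2\sigma'\beta+q=0$, and solve it using the explicit form of $\beta$. Your write-up merely adds detail the paper leaves implicit (the difference-of-squares factorisation of $\beta$ and the observation that $2A(s+s_{0})-q$ cannot vanish where $\beta>0$), which is a welcome but not substantively different elaboration.
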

\begin{proof}
This follows from straightforward calculation. The Hermitian form of $g_{K}$ is given by
$$\omega_{K} = e^{2\sigma}(\theta \wedge ds +\beta\pi^{\ast}\eta)$$
and so
$$d\omega_{K} = e^{2\sigma}(d\theta \wedge ds + (\beta'(s)+2\sigma'(s)\beta(s))ds \wedge \pi^{\ast}\eta).$$
Using the fact that $d\theta = \eta$, this vanishes if
$$\beta'(s)+2\sigma'(s)\beta(s)+q = 0.$$
Hence as $\beta(s) = A(s+s_{0})^{2}-q^{2}/4A$  it follows that (up to a constant)
$$\sigma(s) = -\log\left(|2A(s+s_{0})-q|\right).$$
\end{proof}
In order to compute the first Chern class of $W_{q}$ we revert to considering the manifold as the projectivisation of a rank two holomorphic vector bundle, namely $\mathbb{P}(L_{q}\oplus \mathcal{O})$. Some of the topology we need is presented in section 6 of \cite{WW}. Over each $\mathbb{C}P^{1}$-fiber there is the tautological line bundle $\mathcal{O}_{\mathbb{C}P^{1}}(-1)$. We denote the first Chern class of the dual of this line bundle over $W_{q}$ by $F = c_{1}(\mathcal{O}_{\mathbb{C}P^{1}}(1))$.  The Leray-Hirsch theorem states that 
$H^{2}(W_{q};\mathbb{Z}) \cong H^{2}(M;\mathbb{Z})\oplus \langle F\rangle$. In such a setting, we have the following lemma.
\begin{lemma}[Proposition 6.4 in \cite{WW}]\label{Cclasslem}
Let $W_{q} = \mathbb{P}(L_{q}\oplus \mathcal{O})$ and $F$ be as described above. Then
$$c_{1}(W_{q}) = (p+q)\pi^{\ast}a+2F.$$
\end{lemma}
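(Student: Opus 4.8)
The plan is to compute $c_{1}(W_{q})$ from the two canonical short exact sequences of holomorphic bundles attached to the projective bundle $\pi\colon W_{q}=\mathbb{P}(L_{q}\oplus\mathcal{O})\to M$, and then to read off the answer in the Leray--Hirsch basis $\{\pi^{\ast}a,F\}$. Since $c_{1}$ is additive on short exact sequences, the relative tangent bundle sequence
$$0\to T_{W_{q}/M}\to TW_{q}\to \pi^{\ast}TM\to 0$$
immediately gives $c_{1}(W_{q})=c_{1}(T_{W_{q}/M})+\pi^{\ast}c_{1}(M)$. The pullback term is the easy one: the Fano K\"ahler--Einstein normalisation fixes $c_{1}(M)=pa$, so it contributes $p\,\pi^{\ast}a$.

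For the relative term I would use the relative Euler sequence for the classical projectivisation (parametrising lines in the fibres, so that the tautological subbundle restricts to $\mathcal{O}_{\mathbb{C}P^{1}}(-1)$ on each fibre, matching the definition of $F$ in the excerpt). Writing $E=L_{q}\oplus\mathcal{O}$, this sequence reads
$$0\to \mathcal{O}\to \pi^{\ast}E\otimes \mathcal{O}_{\mathbb{C}P^{1}}(1)\to T_{W_{q}/M}\to 0,$$
whence $c_{1}(T_{W_{q}/M})=c_{1}\!\left(\pi^{\ast}E\otimes\mathcal{O}_{\mathbb{C}P^{1}}(1)\right)$. Applying the twisting identity $c_{1}(V\otimes L)=c_{1}(V)+(\operatorname{rk}V)\,c_{1}(L)$ with $V=\pi^{\ast}E$ of rank $2$ and $c_{1}(\mathcal{O}_{\mathbb{C}P^{1}}(1))=F$, and noting $c_{1}(E)=c_{1}(L_{q})+c_{1}(\mathcal{O})=qa$ (the Euler class of $P_{q}$ being $qa$ and $\mathcal{O}$ trivial), yields $c_{1}(T_{W_{q}/M})=q\,\pi^{\ast}a+2F$. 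Adding the two contributions gives $c_{1}(W_{q})=(p+q)\pi^{\ast}a+2F$, as claimed.

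The main obstacle is not any hard calculation but rather pinning down conventions so the coefficients are correct. Two points need care. First, the projectivisation convention (lines versus rank-one quotients) determines the twist in the relative Euler sequence and hence whether $F$ or $-F$ is produced; this must be matched to the definition of $F$ as the dual of the fibrewise tautological $\mathcal{O}_{\mathbb{C}P^{1}}(-1)$. Second, the class $F$ is only determined modulo $\pi^{\ast}H^{2}(M;\mathbb{Z})$ by its restriction to the fibres, so one must verify that the class implicitly used above, namely $c_{1}(\mathcal{O}_{\mathbb{P}(L_{q}\oplus\mathcal{O})}(1))$ for this specific model of $E$, coincides with the generator $F$ fixed by the Leray--Hirsch splitting in \cite{WW}. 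Once these conventions are aligned, the identity is forced by additivity of $c_{1}$.
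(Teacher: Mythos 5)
Your argument is correct, but there is nothing in the paper to compare it against: Lemma \ref{Cclasslem} is quoted verbatim as Proposition 6.4 of \cite{WW}, and no proof is given in the text. Your route --- combining the relative tangent sequence $0\to T_{W_{q}/M}\to TW_{q}\to\pi^{\ast}TM\to 0$ with the relative Euler sequence $0\to\mathcal{O}\to\pi^{\ast}E\otimes\mathcal{O}_{\mathbb{P}(E)}(1)\to T_{W_{q}/M}\to 0$ for $E=L_{q}\oplus\mathcal{O}$ --- is the standard one and yields exactly the stated coefficients, since $c_{1}(T_{W_{q}/M})=\pi^{\ast}c_{1}(E)+2\,c_{1}(\mathcal{O}_{\mathbb{P}(E)}(1))=q\,\pi^{\ast}a+2F$ and $\pi^{\ast}c_{1}(M)=p\,\pi^{\ast}a$. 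Regarding the convention issue you flag (that $F$ is only pinned down modulo $\pi^{\ast}H^{2}(M;\mathbb{Z})$ by its fibrewise restriction): this can be settled internally, because the proof of Theorem \ref{mainT} uses the lemma together with $\int_{\tau_{1}^{0}}c_{1}(W_{q})=p+q$, which forces $F$ to restrict trivially to the zero section $\tau_{1}^{0}$; and the zero section of $L_{q}$ inside $\mathbb{P}(L_{q}\oplus\mathcal{O})$ is precisely the section determined by the trivial summand $\mathcal{O}\subset L_{q}\oplus\mathcal{O}$, along which the tautological bundle $\mathcal{O}_{\mathbb{P}(E)}(-1)$ restricts to $\mathcal{O}$. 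Hence the class $c_{1}(\mathcal{O}_{\mathbb{P}(L_{q}\oplus\mathcal{O})}(1))$ you produce coincides with the generator $F$ used in the paper, and your proof is complete as written.
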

We can now compute the cohomology class of the K\"ahler metric $g_{K}$ conformal to the L\"u-Page-Pope metric.
\begin{theorem}\label{mainT}
Let $(W_{q}, g) $ be a L\"u-Page-Pope metric.  Then
\begin{enumerate}
\item the metric $g$ is conformal to a K\"ahler metric $g_{K}$, and
\item the cohomology class of the associated class $\omega_{K}$ is a scalar multiple of the first Chern class of $W_q$.
\end{enumerate}
\end{theorem}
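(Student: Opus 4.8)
The first assertion is precisely Lemma \ref{confKlem}, so only the cohomological statement (2) requires argument. By the Leray--Hirsch isomorphism $H^{2}(W_{q};\mathbb{R})\cong \pi^{\ast}H^{2}(M;\mathbb{R})\oplus\langle F\rangle$, and because $\omega_{K}$ is invariant under the fiberwise $U(1)$-action and is assembled from $\eta$ (whose class is $a$), the plan is to write
$$[\omega_{K}] = c\,\pi^{\ast}a + \kappa F, \qquad c,\kappa\in\mathbb{R},$$
and then, in light of Lemma \ref{Cclasslem}, to prove that $(c,\kappa)$ is proportional to $(p+q,2)$. The whole theorem thereby collapses to a single scalar identity, which I expect the constraint (\ref{Aeq}) to furnish.

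To determine $c$ and $\kappa$ I would restrict $\omega_{K}$ to the two sections $S_{0}$ and $S_{4}$ of $\pi\colon W_{q}\to M$ obtained by collapsing the $U(1)$-fiber at $s=0$ and $s=4$. On each section $ds$ pulls back to zero, so the restriction is simply $\big(e^{2\sigma}\beta\big)\big|_{s_{\ast}}\,\eta$, of class $\big(e^{2\sigma}\beta\big)\big|_{s_{\ast}}\,a$. On the other hand, these sections are the projectivizations of the summands $\mathcal{O}$ and $L_{q}$ of $L_{q}\oplus\mathcal{O}$, so $F$ restricts to $0$ on the former and to $-qa$ on the latter (as $\mathcal{O}_{\mathbb{C}P^{1}}(-1)$ restricts to the corresponding sub-line-bundle). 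Comparing the two descriptions of $[\omega_{K}]|_{S_{0}}$ and $[\omega_{K}]|_{S_{4}}$ gives two linear relations: $c$ equals the value of $e^{2\sigma}\beta$ at the endpoint where $F$ restricts trivially, and $\kappa q$ equals the difference of the two endpoint values of $e^{2\sigma}\beta$. A fiber integration $\int_{\mathbb{C}P^{1}}\omega_{K}\propto\int_{0}^{4}e^{2\sigma}\,ds$ provides an independent check on $\kappa$.

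The computation is organised around the identity $\tfrac{d}{ds}\big(e^{2\sigma}\beta\big)=-q\,e^{2\sigma}$, an immediate consequence of the K\"ahler condition $\beta'+2\sigma'\beta+q=0$ from Lemma \ref{confKlem}; this ties the endpoint difference of $e^{2\sigma}\beta$ to $\int_{0}^{4}e^{2\sigma}\,ds$ and hence relates $c$ to $\kappa$. Writing $\beta=\tfrac{1}{4A}\big(2A(s+s_{0})-q\big)\big(2A(s+s_{0})+q\big)$ together with $e^{2\sigma}=\big(2A(s+s_{0})-q\big)^{-2}$, the target proportionality $2c=(p+q)\kappa$ becomes a polynomial identity in $A$, $s_{0}$, $q$; its only non-trivial input is exactly $4A^{2}s_{0}(s_{0}+4)=8Ap+q^{2}$, i.e. the constraint (\ref{Aeq}). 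Substituting it collapses the identity to $8A(p+q)=8A(p+q)$, yielding $[\omega_{K}]=\tfrac{\kappa}{2}\,c_{1}(W_{q})$.

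I expect the main obstacle to be bookkeeping rather than conceptual: correctly deciding which of the two sections carries the vanishing restriction of $F$, and tracking the signs introduced by the orientation of the fiber and by the sign of $q$. If the wrong section is chosen, $c$ picks up the value of $e^{2\sigma}\beta$ at the opposite endpoint and the constraint (\ref{Aeq}) no longer delivers proportionality with $c_{1}(W_{q})$; pinning down this matching — so that $c$ is read off at the end where $F$ restricts trivially — is precisely what lets (\ref{Aeq}) do its work.
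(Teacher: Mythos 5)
Your proposal is correct and follows essentially the same route as the paper: both arguments reduce statement (2) to evaluating $e^{2\sigma}\beta$ at the endpoints $s=0$ and $s=4$ and invoking the constraint (\ref{Aeq}) --- the paper phrases this as the ratio of the evaluations of $[\omega_{K}]$ on the homology classes $\tau_{1}^{0},\tau_{1}^{4}$ equalling $(p-q)/(p+q)$, while you equivalently read off the coefficients $c,\kappa$ of $[\omega_{K}]$ in the basis $\pi^{\ast}a, F$. The one point you leave open (which section carries the trivial restriction of $F$) is settled by the standard fact that $\mathcal{O}_{\mathbb{C}P^{1}}(-1)$ restricts over $\mathbb{P}(\mathcal{O})$ to $\mathcal{O}$, so $F$ vanishes there, and over $\mathbb{P}(L_{q})$ to $L_{q}$, so $F$ restricts to $-qa$; the paper absorbs the same bookkeeping into the intersection-theoretic relation $\tau_{1}^{0}-\tau_{1}^{4}=q\tau_{2}$.
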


\begin{proof}
The first part of the Theorem follows immediately from Lemma \ref{confKlem}. In order to compute the cohomology class of the metric some distinguished homology classes will be introduced.  Let $\tau_{1}\in H_{2}(M,\mathbb{R})$ be the class dual to $a \in H^{2}(M,\mathbb{R})$, in the sense that $\int_{\tau_{1}}a = 1$.  
Similarly, let $\tau_{2}\in H_{2}(W_{q},\mathbb{R})$ be the homology class dual to $F$, which means  that $\int_{\tau_{2}}F = 1$ (Here one represents $\tau_2$ by a $\mathbb{C}P^1$ fibre divided by $2\pi$.).   Denote by $\tau_{1}^{0}$ the class of $\tau_{1}$ in the copy of $M$ glued in to $W_q$ at $s=0$ and by $\tau_{1}^{4}$ the $\tau_{1}$ in the copy of $M$ glued in at $s=4$. We claim that these classes satisfy the equation 
\begin{equation}\label{eulerclass}
\tau_{1}^{0}-\tau_{1}^{4}=q\tau_{2}.
\end{equation}

Establishing the claim  starts with the fact that $H_{2}(W_q,\mathbb{Z})  \cong H_2(M, \mathbb{Z} )\oplus \tau_{2}$. Here  the first factor represent the pushforward of classes in the base M via a generic section of the vector bundle $L_{q}$, and $\tau_{2}$ is the homology class of the $\mathbb{C}P^1$ fibre.

The copy of $M$ at $s=0$  represents  the zero section of the line bundle $L_{q}$. The class $\tau_{1}^{4}$ obviously does not intersect $\tau_{1}^{0}$ in homology as the manifolds $M$ at $s=0$ and $s=4$ do not intersect. If we restrict to the bundle defined over a representative cycle of the homology class $\tau_{1}^{0}$ then, because the pullback of the first Chern class is the first Chern class of the pullback,  we see that  generic sections of the restriction of $L_{q}$ intersect $q$ times. Hence the homology of the subbundle is  generated by $\tau^{0}_{1}$ and $\tau_{2}$ with

$$
\tau_{1}^{0} \cap \tau_{1}^{0} = q \text{ , }  \tau_{2}\cap \tau_{1}^{0} = 1 \text{ ,  and } \tau_{2}\cap \tau_{2}=0
$$
where $\cap$ denotes oriented intersection.  Hence if $\tau_{1}^{0}\cap (a\tau_{1}^{0}+b\tau_{2}) = 0$, it follows that  $a = 1$ and $b=-q$ because the coefficients are elements of $\mathbb{Z}$.  Hence $\tau^{4}_{1} = \tau_{0} -q\tau_{2}$.  By construction, none of the homology classes $\tau^1_{0}$,$\tau^1_{4}$ or $\tau_{2}$ vanish when they are embedded in $W_{q}$, as they were defined as the classes one gets in the image of this embedding. Therefore  this identity must also hold in $H_{2}(W_{q})$ and  Equation (\ref{eulerclass}) is established.

Using  Equation \ref{eulerclass},  Lemma \ref{Cclasslem} can be restated as
$$\int_{\tau_{1}^{0}}c_{1}(W_{q})  = p+q \textrm{ and } \int_{\tau_{1}^{4}}c_{1}(W_{q}) = p-q.$$
This implies that in order to prove the theorem one needs to evaluate the metric on the copies of $\tau$ at $s=0$ and at $s=4$ and take the ratio.  We compute
$$\frac{e^{2\sigma(4)}\beta(4)}{e^{2\sigma(0)}\beta(0)} = \frac{(2As_{0}-q)^{2}(4A^{2}(4+s_{0})^{2}-q^{2})}{(2A(4+s_{0})-q)^{2}(4A^{2}s_{0}^{2}-q^{2})} = \frac{(2As_{0}-q)(2A(4+s_{0})+q)}{(2A(4+s_{0})-q)(2As_{0}+q)}.$$
Using (\ref{Aeq}) we see
$$\frac{e^{2\sigma(4)}\beta(4)}{e^{2\sigma(0)}\beta(0)} = \frac{8Ap+q^{2}-8Aq-q^{2}}{8Ap+q^{2}+8Aq-q^{2}} = \frac{p-q}{p+q}.$$
The result now follows.

\end{proof}
\begin{remark}  In \cite{Mas}, conformally K\"ahler quasi-Einstein metrics were investigated by Maschler. He showed that in complex dimension 3 and greater, assuming the K\"ahler metric is not a local product, the square root of the conformal factor is a Killing potential and the potential function and the conformal factor are fucntionally dependent, then the manifold is biholomorphic to one of the manifolds $W_{q}$. 
\end{remark}

\section{Metrics on  $\mathbb{C}P^{1}\rightarrow\mathbb{C}P^{1}$ in the Abreu-Guillemin framework}\label{Sect3}
In this section we study in more detail the lowest-dimensional example of the L\"u-Page-Pope construction which occurs on the non-trivial $\mathbb{C}P^{1}$-bundle over $\mathbb{C}P^{1}$. As mentioned in the introduction, we shall switch perspectives and consider this manifold as the blow-up of the complex projective plane $\mathbb{C}P^{2}$ at one point, written $\mathbb{C}P^{2}\sharp\overline{\mathbb{C}P}^{2}$ . In this section we write the conformally K\"ahler L\"u-Page-Pope metric (and Page's conformally Einstein metric and the Koiso-Cao soliton) explicitly in symplectic (also known as action-angle) coordinates. This has a number of nice features; it simplifies the existence theory in \cite{LPP} and \cite{Hallqem} and the results of Theorem \ref{mainT} are almost immediate in this setting. It also suggests how the existence theory might run on the other toric Fano surface with non-vanishing Futaki invariant, $\mathbb{C}P^{2}\sharp2\overline{\mathbb{C}P}^{2}$.
\subsection{$U(2)$-invariant K\"ahler Metrics on $\mathbb{C}P^{2}\sharp\overline{\mathbb{C}P}^{2}$}
To begin with, we consider K\"ahler metrics that are invariant under a Hamiltonian action by the torus $\mathbb{T}^{2}$. The moment polytope (i.e. the image of the moment map) for the manifold $\mathbb{C}P^{2}\sharp\overline{\mathbb{C}P}^{2}$ is the trapezium (trapezoid) $T$ described by the linear inequalities
$$l_{1}(x) = (1+x_{1}), \ \ l_{2}(x) = (1+x_{2}), \ \ l_{3}(x) = (1-x_{1}-x_{2}), \ \ l_{4}(x) = (a+x_{1}+x_{2}).$$
The parameter $a\in (-1,2)$ determines the volume of the exceptional divisor and hence the cohomology class that the associated metric $\omega$ is in.  The case where $a=1$ corresponds to the case when $[\omega] = c_{1}$. We note in this case, the volume of the exceptional divisor is one third that of the volume of the projective line at infinity. The Guillemin theory states that there is an open set in the manifold, diffeomorphic to $T^{\circ}\times \mathbb{T}^{2}$ where the metric takes the form
\begin{equation}\label{torKmet}
g = u_{ij}dx_{i}dx_{j}+u^{ij}d\theta_{i}d\theta_{j},
\end{equation}
where $u$ is a function on $T$ known as the \textit{symplectic potential}. Here $u_{ij}$ is the Hessian matrix in the Euclidean coordinates $x_{1},x_{2}$ on the trapezium $T$, and $u^{ij}$ is the inverse matrix. Furthermore, Guillemin showed that the symplectic potential $u$ has the form
\begin{equation}\label{sympot}
u(x) = \frac{1}{2}\left(\sum_{i=1}^{i=4}l_{i}(x)\log(l_{i}(x))+f(x_{1},x_{2})\right),
\end{equation}
where $f$ is a smooth function with all derivatives continuous up to the boundary $\partial T$. The manifold $\mathbb{C}P^{2}\sharp\overline{\mathbb{C}P}^{2}$ inherits a $U(2)$ action from $\mathbb{C}P^{2}$ which fixes the point that is blown up. Hence the action lifts to $\mathbb{C}P^{2}\sharp\overline{\mathbb{C}P}^{2}$.  It is an example of a cohomogeneity one action as the orbit of a generic point is a three-sphere $\mathbb{S}^{3}$. If we restrict to $U(2)$-invariant K\"ahler metrics then one can take $f(x_{1},x_{2}) = f(x_{1}+x_{2})$ in (\ref{sympot}). For the remainder of the article we will take $t=x_{1}+x_{2}$. We can write the metric explicitly by noting that the Euclidean Hessian of $u$ is given by
$$D^{2}u = \frac{1}{2}\left[\begin{array}{cc} \frac{1}{x_{1}+1}+P(t) & P(t)\\ P(t) & \frac{1}{x_{2}+1}+P(t) \end{array}\right],$$
where
$$P(t) = \frac{1}{1-t}+\frac{1}{a+t}+f''(t).$$
It will also be useful to introduce, in terms of $t=x_{1}+x_{2}$ the related functions
$$F(t) = 1+(2+t)P(t) \textrm{ and } z(t) = F^{-1}(t).$$
The function $z(t)$ satisfies the following conditions at the boundaries
\begin{equation}\label{Bound}
z(-a)=z(1) = 0 \textrm{ and } z'(-a)=(2-a)^{-1}, z'(1)=-1/3.
\end{equation}
The determinant of the metric and the inverse of the matrix $D^{2}u$ are given by
\begin{equation}\label{Det}
\det(D^{2}u) = \frac{F(t)}{4(x_{1}+1)(x_{2}+1)},
\end{equation}
and
\begin{equation}\label{inverse}
(D^{2}u)^{-1} = \frac{2(x_{1}+1)(x_{2}+1)}{F(t)}\left[ \begin{array}{cc} \frac{1}{x_{2}+1}+P(t) & -P(t) \\ -P(t) & \frac{1}{x_{1}+1}+P(t) \end{array}\right].
\end{equation}
The $x_{i}$-components of the Ricci tensor in these coordinates are given by
\begin{equation}\label{Ricci}
\Ric_{ij}=\frac{1}{2}\left(\frac{\partial^{2}}{\partial x_{i}\partial x_{j}}-u^{kl}\frac{\partial u_{ij}}{\partial x_{k}}\frac{\partial}{\partial x_{l}}\right)\log \det(D^{2}u).
\end{equation}
The following quantity will be especially useful in our calculations
\begin{equation}\label{RicODE}
\Ric_{11}-\Ric_{22} = \frac{1}{2}\frac{x_{2}-x_{1}}{(x_{1}+1)(x_{2}+1)}\left(\frac{F'}{F^{2}}+\frac{2(F-1)}{F(2+t)} \right).
\end{equation}
Functions on the manifold that are invariant under the $U(2)$ action can be expressed as functions ${\phi(t):[-a,1]\rightarrow \mathbb{R}}$. We will also need a similar expression to the one above for the Hessian (calculated with the metric (\ref{torKmet}) of such functions:
\begin{equation}\label{HesODE}
\nabla^{2}\phi_{11}-\nabla^{2}\phi_{22} = \frac{1}{2}\frac{x_{2}-x_{1}}{(x_{1}+1)(x_{2}+1)}\frac{\phi'}{F}.
\end{equation}
Throughout this paper we will use the analyst's Laplacian $\Delta = tr(\nabla^{2})$. The Laplacian of a $U(2)$-invariant function $\phi(t)$ is given by
\begin{equation}\label{DelODE}
\Delta \phi = \left(-2(2+t)\frac{F'}{F^{2}}+\frac{4}{F}\right)\phi' + \frac{2(2+t)}{F}\phi''.
\end{equation}
We will also need the formulae for how some of the above quantities transform under a conformal rescalling of the metric. If $\tilde{g} = e^{2\sigma}g$ for $\sigma \in C^{\infty}(M)$ then
\begin{equation}\label{confRic}
\Ric (\tilde{g}) = \Ric (g) -2\left(\nabla^{2}\sigma-d\sigma\otimes d\sigma\right)-(2|\nabla \sigma |^{2}+\Delta\sigma)g,
\end{equation}
where all the quantities on the righthand side are computed with the metric $g$. The Hessian of a function $\phi$ tranforms under conformal rescaling via
\begin{equation}\label{confHes}
\widetilde{\nabla}^{2}\phi = \nabla^{2}\phi +e^{\sigma}[de^{-\sigma} \otimes d\phi + d\phi\otimes de^{-\sigma}-g(\nabla e^{-\sigma},\nabla\phi)g],
\end{equation}
and hence the Laplacian transforms via
\begin{equation}\label{confLap}
\widetilde{\Delta}\phi = e^{-2\sigma}\left(\Delta\phi+2g(\nabla\sigma,\nabla\phi)\right).
\end{equation}

\subsection{Explicit metrics}
We now determine explicit representations for the function $z(t)$.  One could then rearrange and perform the required integration in order to determine the function $f(t)$ in the symplectic potential (\ref{sympot}).
\subsubsection{The L\"u-Page-Pope metrics}
As before, we write $g_{LPP} = e^{2\sigma}g_{K}$, with $g_{K}$ a K\"ahler metric. The L\"u-Page-Pope metrics have $J$-invariant Ricci tensor and so the function $e^{-\sigma}$ is a Killing potential and thus $\sigma(t) = -\log(bt+c)$ for constants $b$ and $c$. We can take the potential function to be invariant under the $U(2)$-action and so $\phi$ is also a function of $t$. We are hence in the setting considered by Maschler  and so by the discussion following Equation (2.2) in \cite{Mas} we have 
$$\phi(t) = -m\log(e^{\sigma(t)}+d) = -m\log\left( \frac{dbt+dc+1}{bt+c}\right),$$
for a constant $d$. We note that the constants $a,b,c,d$ must satisfy
\begin{equation}\label{Positive}
bt+c>0 \textrm{ and } dbt+dc+1>0,
\end{equation}
for all $t\in [-a,1]$. Using Equations (\ref{confRic}), (\ref{confHes}) and (\ref{confLap}), for conformally related quantities,  together with Equations (\ref{RicODE}), (\ref{HesODE}) and (\ref{DelODE}), we can rewrite the equation
$$ Ric_{11}(g_{LPP})-Ric_{22}(g_{LPP})+\nabla^{2}\phi_{11}-\nabla^{2}\phi_{22}=(g_{LPP})_{11}- (g_{LPP})_{22}$$ as a first-order ODE
for $z(t)$:
\begin{equation}\label{LPPZODE}
\begin{split}
\frac{dz}{dt} + & \left(\frac{2}{2+t}-\frac{3b}{bt+c}-\frac{b}{bt+4b-c}-\frac{mb}{(dbt+dc+1)(bt+c)}\right)z\\
&+\frac{2(bt+c)^{2}-(2+t)}{(2+t)(bt+c)(bt+4b-c)} = 0.
\end{split}
\end{equation}

Quasi-Einstein metrics have a first integral due to Kim and Kim \cite{KK} coming from the contracted second Bianchi indentity. For any solution to (\ref{QEMeq}), there is a constant $\mu$ for which the quasi-Einstein potential $\phi$ satisfies
\begin{equation}\label{KK}
1-\frac{1}{m}\left(\Delta\phi-\|\nabla \phi\|^{2}\right)=\mu e^{\frac{2\phi}{m}}.
\end{equation}
We calculate with respect to the K\"ahler metric and obtain another ODE
\begin{equation}
\begin{split}
\frac{dz}{dt} + & \left(\frac{2}{2+t}-\frac{3b}{bt+c} -\frac{b}{bt+c+d^{-1}} -\frac{mb}{(dbt+dc+1)(bt+c)} \right)z \\
& +\left(\frac{\mu(bt+c)^{2}-(dbt+dc+1)^{2}}{2bd(2+t)(bt+c)(bt+c+d^{-1})}\right)= 0.
\end{split}
\end{equation}
In order to have consistency we must have
$$d=(2(2b-c))^{-1} \textrm{ and } \mu = d^{2}+4bd.$$
Using the boundary conditions (\ref{Bound})  we obtain
$$\frac{3-2(b+c)^{2}}{3(b+c)(5b-c)} = -\frac{1}{3} \textrm{ and } \frac{(2-a)-2(c-ab)^{2}}{(2-a)(c-ab)((4-a)b-c)}=\frac{1}{(2-a)}.$$
Rearranging we see that
$$c^{2}=b^{2}+1 \textrm{ and }  c^{2}=a(4-3a)b^{2}+4(a-1)bc+(2-a).$$
If $a\neq 1$ then $c=(2\pm\sqrt{3(2-a)})b$.  In the case $c =(2+\sqrt{3(2-a)})b$, then $c>2b$ and so $d=(2(2b-c))^{-1} <0$. This means that
$$bt+c+d^{-1} = bt+4b-c<0$$ 
on the interval $[-a,1]$. Hence $c>5b$; this is a contradiction as $a\in (-1,2)$. In the case where $c=(2-\sqrt{3(2-a)})b$ we have $b+c>0$ and so $(3-\sqrt{3(2-a)})b>0$. As $-1<a<2$ this means we must have $b>0$.  We also have $c-ba>0$ hence $2-\sqrt{3(2-a)} >a$ and so $a<-1$. This is a contradiction.  Hence $a=1$ and we have proved in a straightforward fashion the second part of Theorem \ref{mainT}.\\
\\
The solution of (\ref{LPPZODE}) is given by
\begin{equation*}
z(t) =\frac{d(bt+c)^{m+3}}{(dbt+dc+1)^{m-1}(2+t)^{2}}  \int_{-1}^{t}\frac{(2+s)-2(bs+c)^{2}}{(bs+c)^{m+4}}(dbs+dc+1)^{m-2}(2+s)ds,
\end{equation*}
where $c = \sqrt{1+b^{2}}$ and $d = (2(2b-c))^{-1}$.
In order that we get a smooth metric we must be able to choose a compatible $b$ so that $z(1)=0$, which is equivalent to
$$I(b) :=\int_{-1}^{1}\frac{(2+s)-2(bs+c)^{2}}{(bs+c)^{m+4}}(dbs+dc+1)^{m-2}(2+s)ds=0.$$
We note that
$$I(0) = \left(\frac{1}{2}\right)^{m-2}\left( \frac{2}{3}\right) >0$$
and, for $m>1$,
$$I(\frac{1}{\sqrt{24}}) = -\frac{1}{12}\int_{-1}^{1}\frac{(s-1)^{2}}{(bs+c)^{m+4}}(dbs+dc+1)^{m-2}(2+s)ds<0.$$
Hence we see that there exists $b\in (0,1/\sqrt{24})$ giving a smooth solution of (\ref{LPPZODE}), and hence of (\ref{QEMeq}).
For a fixed value of $m$ it is very easy to find the approximate values of $b$ (and hence $c$ and $d$) numerically.  When $m=2$ we find
$$b\approx 0.076527, \ \ \ c\approx 1.002924 \textrm{ and } d \approx -0.588325.$$
For $m=50$ we find
$$b\approx 0.005120, \ \ \ c\approx 1.000013 \textrm{ and } d \approx -0.505167.$$
Using the above values for $m=50$ we compute 
$$\frac{\phi(1)-\phi(-1)}{2}\approx 0.517374,$$
which suggests $\phi$ is approximately a linear function with gradient close to $0.52$. If one compares the construction of the Koiso-Cao soliton $(g_{KC}, \phi_{KC})$ in subsection \ref{KCsol} we see there is strong numerical evidence that as $m\rightarrow \infty$
$$\sigma \rightarrow 0, \ \ \ \phi \rightarrow \phi_{KC} \textrm{ and } g_{LPP} \rightarrow g_{KC},$$
where one expects to get convergence in the $C^{\infty}$ topology. A careful study of the above ansatz and the dependence of the values of $b$ upon $m$ would probably yield this result.      
\subsubsection{Page's Einstein metric}
Toric constructions of the K\"ahler metric $g_{K}$ conformal to Page's Einstein metric $g_{P}$ were already given by Abreu \cite{Abr1} and Dammerman \cite{Dam}.  Both authors constructed Calabi's family of extremal metrics on the manifold which involves solving a fourth-order ODE. Our approach via the function $z(t)$ is slightly different as it uses only the Einstein equation and is therefore second-order. As with the L\"u-Page-Pope metrics we write $g_{P}=e^{2\sigma}g_{K}$.  As the Page metric must have $J$-invariant Ricci tensor where $J$ is the complex structure, the gradient $\nabla_{K}e^{-\sigma}$ must be a holomorphic vector field.  As the function $\sigma$ is also $U(2)$-invariant we find
$$\sigma(t) = - \log(bt+c),$$
where $b,c $ are constants that satisfy $bt+c>0$ for $t\in [-a,1]$. We can always perform a homothetic rescaling to fix $Ric(g_{P})=g_{P}$ and, as in the L\"u-Page -Pope case, rewrite the equation $$Ric_{11}(g_{P})-Ric_{22}(g_{P})=g_{P11}-g_{P22}$$ as a first-order ODE  for the function $z(t)$:
\begin{equation}
\frac{dz}{dt}+\left(\frac{2}{2+t}-\frac{3b}{bt+c}-\frac{b}{bt+4b-c}\right)z+\left( \frac{2(bt+c)^{2}-(2+t)}{(2+t)(bt+c)(bt+4b-c)}\right)=0.
\end{equation}
This has a solution  $z(t)=\frac{\mathcal{A}(t)}{\mathcal{B}(t)}$, with 
\begin{equation*}
\begin{split} 
\mathcal{A}(t)  = & 6 Kb^{7}t^{4}+24 Kb^{7} t^{3}+12K b^{6}ct^{3}+72Kcb^{6}t^{2}+72Kb^{5} c^{2}t \\
&-12Kb^{4}c^{3}t+24Kb^{4}c^{3}-6Kb^{3}c^{4}+6b^{3} t^{2}+12b^{2}ct+6bc^{2}-2bt-2b-c,\\
\mathcal{B}(t) =  &(6b^{3} (t+2)^{2}),
\end{split}
\end{equation*} 
and $K$ a constant.
In order to determine the constants we consider the boundary behaviour of $z(t)$.  
Using the form of the solution and the boundary conditions we let 
$$z(t) = \frac{(t-1)(t+a)(At^{2}+Bt+C)}{(t+2)^{2}}.$$
Then using the conditions on the derivative
$$A+B+C = \frac{-3}{(1+a)} \textrm{ and } a^{2}A-aB+C = \frac{-(2-a)}{a+1}.$$
Using the form of $z(t)$ we obtain
$$A = Kb^{4}, B+(a-1)A =  4Kb^{4}+2Kb^{3}c \textrm{ and } -aA+(a-1)B+C = 12Kb^{3}c+1 .$$
This yields
$$(30-7a)A+(a-7)B+C=1.$$
Hence
$$
\left(\begin{tabular}{ccc}
1 & 1 & 1\\
$(30-7a)$ & $(a-7)$ & 1\\
$a^{2}$ & $-a$ & 1
\end{tabular}\right) 
\left(\begin{tabular}{c}
A\\
B\\
C
\end{tabular}\right)=
\left(\begin{tabular}{c}
$\frac{-3}{1+a}$\\
1\\
$\frac{-(2-a)}{(1+a)}$
\end{tabular}\right).
$$
Solving this system yields
$$A =\frac{2(a-2)}{(1+a)(a^{2}-16a+37)} ,\ \ B =\frac{a^{2}+10a-33}{(1+a)(a^{2}-16a+37)}, $$
and $$ C=\frac{-2(2a^{2}-18a+37)}{(1+a)(a^{2}-16a+37)}.$$
From this we can deduce
$$\frac{c}{b} = \frac{3a^{2}-4a-13}{4(a-2)}.$$
On the other hand, the conditions (\ref{Bound}) yield
$$\frac{3-2(b+c)^{2}}{3(b+c)(5b-c)} = -\frac{1}{3} \textrm{ and } \frac{(2-a)-2(c-ab)^{2}}{(2-a)(c-ab)((4-a)b-c)}=\frac{1}{(2-a)}.$$
Rearranging we see that
$$c^{2}=b^{2}+1 \textrm{ and }  c^{2}=a(4-3a)b^{2}+4(a-1)bc+(2-a).$$
Hence
$$(a-1)((1-3a)b^{2}+4bc-1)=0.$$
If $a=1$ then $c=7b/2$ and 
$$A=-\frac{1}{22}, \ \ \  B=-\frac{1}{2}, \textrm{ and }   C = -\frac{21}{22}.$$
By examing the form of the solution $z(t) = \mathcal{A}(t)/\mathcal{B}(t)$ one can see that this choice leads to inconsistency. 
If $a\neq 1$ we have ${c=(2\pm\sqrt{3(2-a)})b}$ and we deduce
$$3a^{4}-8a^{3}-42a^{2}+168a-125=0.$$
and so $a \approx 1.057769$. We note with this value of $a$, that the ratio of the volume of the exceptional divisor and the projective line that does not intersect it is 
$$\frac{3}{2-a}\approx 3.183933$$
which agrees with values calculated by other methods in \cite{Bes} and \cite{LBa}.
\subsubsection{The Koiso-Cao K\"ahler Ricci soliton}\label{KCsol}
We now look for a $U(2)$-invariant solution to the equation
\begin{equation}\label{Soleqn}
\Ric (g) +\nabla^{2}\phi=\lambda g.
\end{equation}
If we assume the metric is K\"ahler this forces the function $\phi$ to have holomorphic gradient and so we can assume ${\phi=c(x_{1}+x_{2})}$ for some constant $c$.  We will also factor out homothety by setting $\lambda =1$. 
Hence we obtain the following equation for $z(t)$:
\begin{equation}\label{KCBernoulli2}
\frac{dz}{dt}+\frac{2-c(2+t)}{(2+t)}z+\frac{t}{2+t}=0.
\end{equation}
This yields 
$$z(t) = \frac{de^{c(t+2)}}{(t+2)^{2}}+\frac{c^{2}t(t+2)+2c(t+1)+2}{c^{3}(t+2)^{2}},$$
where $d$ is a constant.
Using the boundary conditions $z(1)=z(-1)=0$, we get the equations
$$de^{c} + \frac{2-c^{2}}{c^{3}}=0$$
and
$$\frac{de^{3c}}{9}+\frac{3c^{2}+4c+2}{9c^{3}}=0.$$
This means that $c$ solves
$$e^{2c}(c^{2}-2)+3c^{2}+4c+2=0,$$
which yields $c \approx 0.5276$ and $d\approx -6.91561$. This agrees with the value found by other methods in \cite{HallDGA}. 
\\
As with the L\"u-Page-Pope metric, one could also recover the relevant equations by considering a 1st integral due to Hamilton \cite{Ham} and Ivey \cite{Ivey}; namely
$$\Delta_{\phi}\phi+2\phi=\Delta \phi-|\nabla \phi|^{2}+2\phi= 0,$$
where $\phi$ is normalised so that $\int_{M}\phi e^{-\phi} = 0.$ 
This yields
$$\left(-2(2+t)\frac{F'}{F^{2}}+\frac{4}{F}\right)c-\frac{2(2+t)}{F}c^{2}+2ct=0.$$
Hence
$$\frac{dz}{dt}+\left(\frac{2}{2+t}-c \right)z+\frac{t}{2+t}=0.$$
\section{Metrics on $\mathbb{C}P^{2}\sharp 2\overline{\mathbb{C}P}^{2}$}\label{Sect4}
We now perform a very similar analysis on the toric surface $\mathbb{C}P^{2}\sharp 2\overline{\mathbb{C}P}^{2}$. In this case the moment polytope is the pentagon $P$ given by $l_{i}(x)>0$ for the linear functions:
\begin{equation*}
\begin{split}
l_{1}(x) &= 1+x_{1}\text{ , } l_{2}(x) = 1+x_{2}\text{ , }  l_{3}(x) = a-1-x_{1},\\ 
l_{4}(x) &= a-1-x_{2},  l_{5}(x) = a-1-x_{1}-x_{2}.
 \end{split}
 \end{equation*}
 
Here we assume that the metric is also symmetric under the $\mathbb{Z}_{2}$ action that swaps $x_{1}$ and $x_{2}$.  This is sensible as both the Wang-Zhu K\"ahler-Ricci soliton and the Chen-LeBrun-Weber metric have this symmetry. A $\mathbb{Z}_{2}$-invariant toric K\"ahler metric $g$ on this manifold can be written in symplectic coordinates  given by Equation (\ref{torKmet}) with
\begin{equation*}\label{torKcp2b2}
D^{2}u = \left(\begin{array}{cc} \frac{(a^{2}-ax_{2}-x_{1}^{2}-2x_{1}-1)}{2(x_{1}+1)(a-1-x_{1})(a-1-x_{1}-x_{2})}+f_{11} & \frac{1}{2(a-1-x_{1}-x_{2})}+f_{12}\\ \frac{1}{2(a-1-x_{1}-x_{2})}+f_{12} & \frac{(a^{2}-ax_{1}-x_{2}^{2}-2x_{2}-1)}{2(x_{2}+1)(a-1-x_{2})(a-1-x_{1}-x_{2})}+f_{22}\end{array}\right),
\end{equation*}
where $f:P\rightarrow\mathbb{R}$ is a smooth function with $f(x_{1},x_{2})=f(x_{2},x_{1})$. One can show that the determinant of $g$ is given by
$$\det (g) = \frac{\mathcal{D}}{4(x_{1}+1)(x_{2}+1)(a-1-x_{1})(a-1-x_{2})(a-1-x_{1}-x_{2})}$$
where 
\begin{equation*}
\begin{split}
\mathcal{D}= &a(a^{2}+a-(x_{1}^{2}+x_{2}^{2})-2(x_{1}+x_{2})-2)\\
&+2(x_1+1)(a-1-x_{1})P_{2}f_{11}+2(x_{2}+1)(a-1-x_{2})P_{1}f_{22}\\
& -4(x_{1}+1)(x_{2}+1)(a-1-x_{1})(a-1-x_{2})f_{12} \\
&+4(x_{1}+1)(x_{2}+1)(a-1-x_{1})(a-1-x_{2})(a-1-x_{1}-x_{2})(f_{11}f_{22}-f_{12}^{2}). 
\end{split}
\end{equation*}
 
The inverse is thus given by
$$(D^{2}u)^{-1} = \left(\begin{array}{cc} u^{11} & u^{12}\\u^{21} & u^{22}  \end{array}\right)= \left(\begin{array}{cc} \frac{A_{11}}{\mathcal{D}} & \frac{A_{12}}{\mathcal{D}}\\ \frac{A_{12}}{\mathcal{D}}& \frac{A_{22}}{\mathcal{D}} \end{array}\right), $$
where
\begin{equation*}
\begin{split}
A_{11} =&  2(x_{1}+1)(a-1-x_{1})((a^{2}-ax_{1}-x_{2}^{2}-2x_{2}-1)\\ &+ 2(x_{2}+1)(a-1-x_{2})(a-1-x_{1}-x_{2})f_{22}),\\
A_{12} =& -2(x_{1}+1)(x_{2}+1)(a-1-x_{1})(a-1-x_{2})(1+2(a-1-x_{1}-x_{2})f_{12}), \\ \text{ and } &\\
A_{22} =&  2(x_{2}+1)(a-1-x_{2})((a^{2}-ax_{2}-x_{1}^{2}-2x_{1}-1)\\ &+ 2(x_{1}+1)(a-1-x_{1})(a-1-x_{1}-x_{2})f_{11}).
\end{split}
\end{equation*}
Straightforward calculation yields the following.
\begin{lemma}\label{VertexCalc}
Let $g$ be a toric K\"ahler metric on $\mathbb{C}P^{2}\sharp 2\overline{\mathbb{C}P}^{2}$ of the form given by Equation (\ref{torKmet}). Then the inverse $u^{ij}$ satisfies
$$u^{ij}(-1,-1) = u^{ij}(-1,a-1) = u^{ij}(0,a-1) = 0$$
for $i,j\in \{1,2\}$. The derivatives satisfy
$$u^{11}_{1}(-1-1)=2, \ \ \ u^{12}_{1}(-1,-1)=u^{12}_{2}(-1,-1) = 0, \ \ \ u^{22}_{2}(-1,-1) = 2, $$
$$u^{11}_{1}(-1,a-1)=2, \ \ \ u^{12}_{1}(-1,a-1)=u^{12}_{2}(-1,a-1) = 0, \ \ \ u^{22}_{2}(-1,a-1) = -2, $$
and
$$u^{11}_{1}(0,a-1)=-2, \ \ \ u^{12}_{1}(0,a-1)=0, \ \ \ u^{12}_{2}(0,a-1) = 2, \ \ \ u^{22}_{2}(0,a-1) = -2. $$
\end{lemma}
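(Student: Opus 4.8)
The plan is to exploit the explicit expressions $u^{ij} = A_{ij}/\mathcal{D}$ recorded above and simply evaluate them, together with their first derivatives, at the three vertices. First I would record which defining functions $l_i$ vanish at each vertex: at $(-1,-1)$ the edges $l_1 = 1+x_1$ and $l_2 = 1+x_2$ meet; at $(-1,a-1)$ the edges $l_1$ and $l_4 = a-1-x_2$ meet; and at $(0,a-1)$ the edges $l_4$ and $l_5 = a-1-x_1-x_2$ meet. In each case the two vanishing linear factors are exactly those that appear in the numerators $A_{ij}$, which is the structural reason the vanishing assertions hold.

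The first step is to check that $\mathcal{D}$ is nonzero at each vertex, so that $u^{ij}$ is smooth there and may be differentiated directly. Every term of $\mathcal{D}$ involving $f$ carries at least one factor that vanishes at the vertex, so only the leading polynomial $a(a^2 + a - (x_1^2+x_2^2) - 2(x_1+x_2) - 2)$ contributes; evaluating gives $\mathcal{D}(-1,-1) = a^2(a+1)$, $\mathcal{D}(-1,a-1) = a^2$ and $\mathcal{D}(0,a-1) = a(a-1)$, all nonzero in the admissible range of $a$. The vanishing $u^{ij}(v) = 0$ is then immediate, since each $A_{ij}$ manifestly contains one of the two vanishing factors attached to the vertex $v$.

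For the derivatives I would use that, because $A_{ij}$ vanishes at the vertex, the quotient rule collapses to $\partial_k u^{ij} = (\partial_k A_{ij})/\mathcal{D}$ there. The key observation, which keeps the computation short, is that every term produced by differentiating the $f$-dependent part of $A_{ij}$ still carries a factor that vanishes at the vertex, so all third derivatives of $f$ drop out and each answer depends only on $a$. Thus $\partial_k A_{ij}$ reduces to differentiating the explicit rational prefactors; for instance $\partial_1 A_{11}(-1,-1) = 2a \cdot a(a+1) = 2a^2(a+1)$, whence $u^{11}_1(-1,-1) = 2$, and the analogous one-line evaluations produce the remaining stated values, with the $\mathbb{Z}_2$-symmetry $x_1 \leftrightarrow x_2$ halving the work.

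There is no genuine analytic obstacle, since the statement is a finite algebraic verification; the part requiring care is the bookkeeping. At each vertex one must confirm precisely which factors vanish and, crucially, verify that no surviving term secretly involves a derivative of $f$. It is exactly this independence of $f$ that makes the boundary data universal, and establishing it identically in $f$, rather than the numerical values themselves, is the real content of the lemma.
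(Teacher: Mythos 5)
Your proposal is correct and is essentially the paper's own argument, which is given only as ``straightforward calculation'': direct evaluation of $A_{ij}/\mathcal{D}$ and of $(\partial_k A_{ij})/\mathcal{D}$ at the three vertices, using that $\mathcal{D}$ reduces to $a^2(a+1)$, $a^2$ and $a(a-1)$ there and that all $f$-dependent terms retain a vanishing factor. One small imprecision: at $(0,a-1)$ the vanishing of $A_{11}$ is not ``manifest'' from the prefactor $2(x_1+1)(a-1-x_1)$, but comes from the bracket $a^2-ax_1-x_2^2-2x_2-1=(a-1-x_2)(a+1+x_2)-ax_1$ evaluating to zero, so that step needs the one-line check rather than mere inspection; the conclusion and all the stated values are nonetheless verified.
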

The quasi-Einstein metrics on $\mathbb{C}P^{2}\sharp\overline{\mathbb{C}P}^{2}$ are Hermitian and have $J$-invariant Ricci tensor. If we search for metrics on $\mathbb{C}P^{2}\sharp 2\overline{\mathbb{C}P}^{2}$ that are invariant under the $\mathbb{T}^{2}\times \mathbb{Z}_{2}$ action described above and which are also $J$-invariant, we have the following result:

\begin{proposition}\label{CP2B2form}
Let $(g,\phi)$ be a Hermitian quasi-Einstein metric on $\mathbb{C}P^{2}\sharp 2\overline{\mathbb{C}P}^{2}$. Suppose $g$ is invariant under the action of $\mathbb{T}^{2}\times \mathbb{Z}_{2}$, has $J$-invariant Ricci tensor and $g=e^{2\sigma}g_{K}$ for a K\"ahler metric $g_{K}$. Then
$$\sigma = -\log(bt+c) \textrm{ and } \phi = m\log\left( \frac{dbt+dc+1}{bt+c} \right).$$
\end{proposition}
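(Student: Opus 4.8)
The plan is to mirror the structural analysis carried out for $\mathbb{C}P^{2}\sharp\overline{\mathbb{C}P}^{2}$ in the previous subsections, handling the two assertions in turn. For the conformal factor the starting point is the conformal transformation law (\ref{confRic}), applied with $\widetilde g = g$ and base metric $g_{K}$, so that it reads $\Ric(g)=\Ric(g_{K})-2(\nabla^{2}_{g_{K}}\sigma-d\sigma\otimes d\sigma)-(2|\nabla_{g_{K}}\sigma|^{2}+\Delta_{g_{K}}\sigma)g_{K}$. Since $g_{K}$ is Kähler, both $\Ric(g_{K})$ and $g_{K}$ are $J$-invariant, and the final scalar term is a multiple of $g_{K}$; hence $\Ric(g)$ is $J$-invariant if and only if $\nabla^{2}_{g_{K}}\sigma-d\sigma\otimes d\sigma$ is $J$-invariant. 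Because $\nabla^{2}_{g_{K}}(e^{-\sigma})=-e^{-\sigma}(\nabla^{2}_{g_{K}}\sigma-d\sigma\otimes d\sigma)$, this is in turn equivalent to the Hessian of $e^{-\sigma}$ being $J$-invariant, i.e. to $e^{-\sigma}$ being a Killing potential on $(\mathbb{C}P^{2}\sharp2\overline{\mathbb{C}P}^{2},g_{K},J)$. By equivariance we may take $g_{K}$, and hence $\sigma$, to be $\mathbb{T}^{2}\times\mathbb{Z}_{2}$-invariant.

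The second step identifies these Killing potentials. In the Abreu--Guillemin description \cite{Gui, Abr1, Abr2} the moment coordinates $x_{1},x_{2}$ are themselves Killing potentials, and the only $\mathbb{T}^{2}$-invariant Killing potentials are the affine functions $b_{0}+b_{1}x_{1}+b_{2}x_{2}$ of the moment coordinates. Imposing the $\mathbb{Z}_{2}$ symmetry that interchanges $x_{1}$ and $x_{2}$ forces $b_{1}=b_{2}=:b$, so with $t=x_{1}+x_{2}$ one obtains $e^{-\sigma}=bt+c$ and hence $\sigma=-\log(bt+c)$, where $bt+c>0$ on the relevant range of $t$ so that $\sigma$ is smooth. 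Note that the Killing condition upgrades the a priori two-variable symmetry of $\sigma$ to dependence on $t$ alone, which is what makes the final formula one-dimensional.

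For the potential $\phi$, I would reformulate the quasi-Einstein equation (\ref{QEMeq}) through the substitution $w=e^{-\phi/m}$, for which a direct computation gives $\nabla^{2}\phi-\tfrac{1}{m}\,d\phi\otimes d\phi=-m\,w^{-1}\nabla^{2}_{g}w$. Thus (\ref{QEMeq}) becomes $\Ric(g)-m\,w^{-1}\nabla^{2}_{g}w=\lambda g$, so $\nabla^{2}_{g}w=\tfrac{w}{m}\bigl(\Ric(g)-\lambda g\bigr)$ is $J$-invariant. Feeding this back through the conformal Hessian formula (\ref{confHes}) and using the Killing property of $e^{-\sigma}$ already established puts us in precisely the situation analysed by Maschler: by the discussion following Equation (2.2) in \cite{Mas}, $\phi$ is functionally dependent on $\sigma$, with $\phi=m\log\bigl(e^{\sigma}+d\bigr)$ for a constant $d$. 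Substituting $e^{\sigma}=(bt+c)^{-1}$ then yields $\phi=m\log\!\left(\tfrac{dbt+dc+1}{bt+c}\right)$, as claimed.

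The hard part is this last step. In the $\mathbb{C}P^{2}\sharp\overline{\mathbb{C}P}^{2}$ case the corresponding conclusion was essentially free, because the $U(2)$ action is of cohomogeneity one and so every invariant function, in particular $\phi$, is automatically a function of $t$ and thus functionally dependent on $\sigma$. Here only the $\mathbb{T}^{2}\times\mathbb{Z}_{2}$ symmetry is available, so a priori $\phi$ (equivalently $w$) is a genuine symmetric function of the two variables $(x_{1},x_{2})$; one must use the $J$-invariance of $\nabla^{2}_{g}w$ to force it down to a function of $t$ before Maschler's relation can be applied. Executing this requires the explicit, and substantially lengthier, expressions for $\det(g)$ and $u^{ij}$ recorded above for the pentagon, together with the boundary data of Lemma \ref{VertexCalc}, and parallels (\ref{RicODE}) and (\ref{HesODE}) with the extra facets making the algebra heavier. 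Since the proposition asserts only the functional form of $\sigma$ and $\phi$, no ODE need be solved: the argument reduces to verifying that the $J$-anti-invariant part of $\nabla^{2}_{g}w$ vanishes identically.
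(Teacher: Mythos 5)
Your treatment of $\sigma$ is essentially the paper's argument: $J$-invariance of $\Ric(g)$ forces $\nabla e^{-\sigma}$ to be a (real) holomorphic vector field for $g_{K}$, so $e^{-\sigma}$ is affine in the moment coordinates, and the $\mathbb{Z}_{2}$-symmetry collapses it to a function of $t=x_{1}+x_{2}$. That half is fine.

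The gap is in the $\phi$ half, which you correctly flag as the hard part but then do not carry out. Two concrete problems. First, your closing sentence inverts the logic: the vanishing of the $J$-anti-invariant part of $\nabla^{2}_{g}w$ is the \emph{hypothesis} (it follows from (\ref{QEMeq}) together with the $J$-invariance of $g$ and of $\Ric(g)$); there is nothing to ``verify''. What must actually be done is to translate that vanishing into explicit equations on the polytope and integrate them for $w$ --- and, contrary to your claim that ``no ODE need be solved'', this is precisely an integration. Second, the appeal to Maschler is both premature (his relation presupposes the functional dependence of $\phi$ on $\sigma$, which is exactly what you have not yet established) and unnecessary. The paper's proof does the missing computation directly: writing the $J$-invariance of $\nabla^{2}\phi-\frac{1}{m}d\phi\otimes d\phi$ with respect to $g$ in moment coordinates via (\ref{confHes}) yields the closed system
$$\phi_{ij}-\sigma'(\phi_{i}+\phi_{j})-\tfrac{1}{m}\phi_{i}\phi_{j}=0, \qquad 1\le i,j\le 2,$$
which in your variable $w=e^{-\phi/m}$ reads $w_{ij}=\sigma'(w_{i}+w_{j})$. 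Everything you need falls out of this system: $\partial_{k}(w_{1}-w_{2})=\sigma'(w_{1}-w_{2})$ for $k=1,2$, so $w_{1}-w_{2}$ is a constant multiple of $e^{\sigma}$, and the $\mathbb{Z}_{2}$-symmetry (which sends $w_{1}-w_{2}$ to its negative while fixing $e^{\sigma}$) forces that constant to vanish; hence $w=w(t)$, and the remaining equation $w''=2\sigma'w'$ integrates to $w=\alpha e^{\sigma}+\beta$, which is the stated closed form for $\phi$. Until you produce the system $w_{ij}=\sigma'(w_{i}+w_{j})$ (or an equivalent explicit consequence of the $J$-invariance) and integrate it, the proof is incomplete.
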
 
\begin{proof}
The form of $\sigma$ follows from the fact that the Ricci tensor of $g$ is $J$-invariant. This forces the gradient with respect to $g_{K}$, $\nabla e^{-\sigma}$, to be a holomorphic vector field. Hence  $e^{-\sigma}$ is an affine function in the polytope coordinates, invariant under the $\mathbb{Z}_{2}$-action.  With respect to the metric $g$, ${\nabla^{2} \phi - \frac{1}{m} d\phi\otimes d\phi}$ is $J$-invariant. Calculation yields
$$\phi_{ij} - (\sigma'(\phi_{i}+\phi_{j})+\frac{1}{m}\phi_{i}\phi_{j})=0,$$
for $1\leq i,j\leq 2$. This equation can then be solved explicitly, yielding the result. 
\end{proof}
The Kim-Kim first integral (\ref{KK}) and boundary behaviour of the metric given in Lemma \ref{VertexCalc} give some constraints on the quantities $b,c,d$ and $\mu$.  
\begin{proposition}\label{phiconst}
Suppose $(g,\phi)$ is a quasi-Einstein metric on $\mathbb{C}P^{2}\sharp 2\overline{\mathbb{C}P}^{2}$ of the form given in Proposition \ref{CP2B2form}.
Then
$$ \frac{4b}{(c-2b)(dc+1-2db)} =\frac{1}{(c-2b)^2}-\frac{\mu}{(dc+1-2db)^2}, $$
$$0 = \frac{1}{(c+(a-2)b)^2}-\frac{\mu}{(dc+1+(a-2)db)^2}, $$
and
$$\frac{-2b}{(c+(a-1)b)(dc+1+(a-1)db)}=\frac{1}{(c+(a-1)b)^{2}}-\frac{\mu}{(dc+1+(a-1)db)^{2}} .$$
Moreover we have the following;
$$\int_{P} (e^{-\phi}-\mu e^{\left(\frac{2}{m}-1\right)\phi})e^{4\sigma}dx = 0. $$  
\end{proposition}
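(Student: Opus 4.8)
The plan is to read the first three identities as the Kim--Kim first integral (\ref{KK}) evaluated at the three vertices of the pentagon $P$ singled out in Lemma \ref{VertexCalc}, and to obtain the integral identity as a global consequence of the divergence theorem on the closed manifold. Throughout I write $t=x_1+x_2$ and $w=e^{-\sigma}=bt+c$, and record that the vertices $(-1,-1)$, $(-1,a-1)$, $(0,a-1)$ correspond to $t=-2$, $t=a-2$, $t=a-1$, so that $bt+c$ equals $c-2b$, $c+(a-2)b$, $c+(a-1)b$ and $dbt+dc+1$ equals $dc+1-2db$, $dc+1+(a-2)db$, $dc+1+(a-1)db$. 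These are exactly the denominators appearing in the three claimed equations, which already suggests the correspondence.

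For the vertex identities, the first step is to rewrite the left-hand side of (\ref{KK}), computed with the quasi-Einstein metric $g=e^{2\sigma}g_K$, in terms of the K\"ahler metric $g_K$ via the conformal rule (\ref{confLap}). Since $\phi$ and $\sigma$ are functions of $t$ alone, I would work in the symplectic coordinates of (\ref{torKmet}), where $\sqrt{\det g_K}=1$, so that any function $\psi(t)$ satisfies $\Delta_{g_K}\psi=\psi'\sum_{i,j}\partial_{x_i}u^{ij}+\psi''\sum_{i,j}u^{ij}$ and $\|\nabla\psi\|_{g_K}^2=(\psi')^2\sum_{i,j}u^{ij}$. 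Combining these gives
\begin{equation*}
\Delta_g\phi-\|\nabla\phi\|_g^2=e^{-2\sigma}\Big(\phi'\sum_{i,j}\partial_{x_i}u^{ij}+(\phi''+2\sigma'\phi'-(\phi')^2)\sum_{i,j}u^{ij}\Big).
\end{equation*}
The crucial point is that Lemma \ref{VertexCalc} gives $u^{ij}=0$ at each of the three vertices, so every term carrying a factor $\sum_{i,j}u^{ij}$ drops out in the limit and only $e^{-2\sigma}\phi'\sum_{i,j}\partial_{x_i}u^{ij}$ survives. The derivative data of Lemma \ref{VertexCalc} give $\sum_{i,j}\partial_{x_i}u^{ij}=u^{11}_1+u^{12}_1+u^{12}_2+u^{22}_2$ equal to $4$, $0$ and $-2$ at the three vertices respectively. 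Differentiating the expression for $\phi$ (with the sign convention of the L\"u--Page--Pope computation of Section \ref{Sect3}) yields $\phi'(bt+c)(dbt+dc+1)=mb$, while $e^{-2\sigma}=(bt+c)^2$; substituting into (\ref{KK}) and dividing through reduces it at a vertex with coefficient $N$ to $\tfrac{Nb}{wv}=\tfrac{1}{w^2}-\tfrac{\mu}{v^2}$, where $v=dbt+dc+1$. With $N=4,0,-2$ these are precisely the three stated equations.

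For the integral identity, the observation is that (\ref{KK}) can be recast as an exact Laplacian. Using $\Delta_g e^{-\phi}=-e^{-\phi}(\Delta_g\phi-\|\nabla\phi\|_g^2)$ in (\ref{KK}) and multiplying by $e^{-\phi}$ gives
\begin{equation*}
e^{-\phi}-\mu e^{(\frac{2}{m}-1)\phi}=-\frac{1}{m}\Delta_g e^{-\phi}.
\end{equation*}
Since $e^{-\phi}$ is smooth on the closed manifold, $\int_M\Delta_g e^{-\phi}\,dV_g=0$. Finally, as the dimension is four and $\sqrt{\det g_K}=1$, one has $dV_g=e^{4\sigma}dV_{g_K}=e^{4\sigma}\,dx\,d\theta$ in symplectic coordinates, so integrating out the torus angles identifies $\int_M(\,\cdot\,)\,dV_g$ with a nonzero constant multiple of $\int_P(\,\cdot\,)e^{4\sigma}\,dx$, and the identity follows at once.

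The main obstacle is the bookkeeping at the degenerate orbits: one must justify evaluating the intrinsic quantities $\Delta_g\phi$ and $\|\nabla\phi\|_g^2$ at the vertices, where the action-angle chart breaks down, by taking limits from the interior of $P$ and invoking the boundary values and first derivatives of $u^{ij}$ from Lemma \ref{VertexCalc}. In particular one must confirm that the $\sum_{i,j}u^{ij}$-terms genuinely vanish in these limits rather than producing indeterminate contributions against $\phi''$ or $(\phi')^2$, and track the conformal factors through (\ref{confLap}) without error; this is where the computation must be carried out with care.
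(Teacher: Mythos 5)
Your proof is correct and follows essentially the same route as the paper: the three algebraic identities come from evaluating the Kim--Kim integral (\ref{KK}), rewritten with respect to $g_{K}$ via (\ref{confLap}), at the three vertices of Lemma \ref{VertexCalc} where $\sum_{i,j}u^{ij}$ vanishes and $\sum_{i,j}\partial_{x_i}u^{ij}$ equals $4$, $0$, $-2$; and the integral constraint is the divergence theorem applied to the same identity. Your formulation of the last step as $e^{-\phi}-\mu e^{(\frac{2}{m}-1)\phi}=-\frac{1}{m}\Delta_g e^{-\phi}$ integrated against $dV_g$ is equivalent to the paper's use of the weighted Laplacian $\Delta_{\mathcal{F}}$ with $\mathcal{F}=\phi-2\sigma$ (since $\Delta_g e^{-\phi}\,dV_g=\div_{g_K}(-e^{-\mathcal{F}}\nabla\phi)\,dV_{g_K}$), and your choice of the sign convention $\phi=-m\log\bigl(\frac{dbt+dc+1}{bt+c}\bigr)$ from Section \ref{Sect3} is indeed the one consistent with the stated equations.
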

\begin{proof}
All the equations above can be derived by examining the Kim-Kim first integral (\ref{KK}).  Calculating quantities with respect to the K\"ahler metric $g_{K}$, this equation becomes
\begin{equation}\label{KKconf}
m(e^{2\sigma} - \mu e^{\frac{2\phi}{m}+2\sigma}) = \Delta \phi +2g_{K}(\nabla \sigma, \nabla \phi)-|\nabla \phi|^{2}.
\end{equation}
The first three equations now follow from Lemma \ref{VertexCalc}. For the integral constraint we note that the right-hand side of (\ref{KKconf}) can be written as $\Delta_{\mathcal{F}}\phi$ where 
$$\Delta_{\mathcal{F}}(\cdot):= \Delta(\cdot) -g_{K}(\nabla \mathcal{F},\nabla\cdot)$$
and 
$$ \mathcal{F} = \phi-2\sigma.$$
The result follows from noting that for any $\mathcal{F}$
$$\int_{M}\Delta_{\mathcal{F}}(\cdot) e^{-\mathcal{F}}dV_{g} = 0.$$
\end{proof}
If one fixes the value of $a$ (and so the particular K\"ahler class)  and the value of $m$, then Proposition \ref{phiconst} yields four equations in the four unknowns $b,c,d$ and $\mu$.  Given the result of Theorem \ref{mainT} it is sensible to look for conformally K\"ahler quasi-Einstein metrics on $\mathbb{C}P^{2}\sharp 2\overline{\mathbb{C}P}^{2}$ in the first Chern class $c_{1}$ which corresponds to setting $a=2$. Using a numerical program to evaluate the integral we find in the case $a=2$ and $m=2$  that the system of equations in Proposition \ref{phiconst} have the unique admissable solution (values are given to 6 significant figures)
$$ b \approx -0.0744357, \ \ \  c\approx 1.00482 , \ \ \ d \approx-0.463585 \textrm{ and } \mu \approx 0.282617.$$
Another use of the proposition is to rule out certain limiting behaviours of hypothetical families of quasi-Einstein metrics.  Suppose that a family of conformally K\"ahler quasi-Einstein metrics of the form given in  Proposition \ref{CP2B2form} converges smoothly to a conformally K\"ahler gradient Ricci soliton. Then, as well as the Wang-Zhu soliton, one could in theory also converge to the Chen-LeBrun-Weber metric. By the calculations in \cite{HM14} this would mean as $m\rightarrow \infty$,
$$b\rightarrow  -0.217907 \textrm{ and } c\rightarrow 1.000632.$$
As $\phi$ converges to a constant the integral constraint of Proposition \ref{phiconst}  would mean $\mu\rightarrow 1$.  One can then check that these values are not admissable as solutions of the contraints of Proposition \ref{phiconst} and so conclude that the Chen-LeBrun-Weber metric is not the smooth limit of such a hypothetical family.

\section{Future work}\label{Sect5}
In this section we list and comment on some future directions for research that our current work raises. 

\begin{enumerate}
\item Are there conformally K\"ahler analogues of the L\"u-Page-Pope metric on $\mathbb{C}P^{2}\sharp 2\overline{\mathbb{C}P}^{2}$? The second and fourth authors are currently investigating this question numerically using an algorithm they developed for numerically approximating toric K\"ahler metrics on $\mathbb{C}P^{2}\sharp 2 \overline{\mathbb{C}P}^{2}$ in \cite{HM14} and \cite{HM15}.
\item What is the significance of the conformally K\"ahler quasi-Einstein metrics? For example, in dimension 4, the K\"ahler metrics conformal to the Page and the Chen-LeBrun-Weber metrics are extremal K\"ahler metrics. An extremal K\"ahler metric is a critical point of the Calabi energy and such metrics are the subject of intense research activity at the time of writing. The K\"ahler classes of the extremal metrics conformal to the Page and Chen-LeBrun-Weber Einstein metrics are distinguished by minimising the Calabi energy over all possible K\"ahler classes on these manifolds \cite{LB}.
\item The existence theorem for compact quasi-Einstein metrics in \cite{Hallqem} can be paraphrased as: ``whenever the manifold $W_{q}$ admits a non-trivial K\"ahler-Ricci soliton, it admits at least one family of quasi-Einstein metrics.'' Is this true in general? Or is it at least true for other constructions of K\"ahler-Ricci solitons, such as that of Wang-Zhu \cite{WZ}? 
\end{enumerate}

\end{document}